\newtheorem{theorem}{Theorem}
\newtheorem{corollary}{Corollary}
\newtheorem{lemma}{Lemma}
\newtheorem{proposition}[theorem]{Proposition}
\newenvironment{proof}[1][Proof]{\noindent\textbf{#1.} }{\ \rule{0.5em}{0.5em}}
\begin{document}

\title{The Stern diatomic sequence via generalized Chebyshev polynomials}
\author{V. De Angelis}

\maketitle

The Stern diatomic sequence is defined by
\begin{equation}
\label{def} a(0)=0,\ \  a(1)=1,\ \  a(2n)=a(n),\ \  a(2n+1)=a(n)+a(n+1).\end{equation}
See \cite{Sam} for a nice survey article.

Define polynomials \(q_r(y_1,\ldots,y_r)\) inductively by
\begin{eqnarray}& q_0=1, \ q_1(y_1)=y_1, \notag \\ &q_r(y_1,\ldots,y_r)=y_1q_{r-1}(y_2,\ldots,y_r)-q_{r-2}(y_3,\ldots,y_r)
 \ \mbox{for } r\geq 2. \label{recur} \end{eqnarray}

The main result of this note is that \(a(n)\) coincides with these polynomials when the variables are given the value of the
gaps between successive 1's in the binary expansion of $n$, increased by 1 (Theorem \ref{th}), and a formula expressing
the same polynomials in terms of sets of increasing integers of alternating parity (Theorem \ref{l1}).

The polynomials \(q_r\) have appeared before as generalized Chebyshev polynomials
in the context of cluster algebras, see the paper
\cite[Lemma 3.2]{Dup}.

A special case
coincides with a recent expression obtained by Defant \cite[eq (29)]{Def}.
Using the polynomial representation, we  give simple proofs to a number of known identities for \(a(n)\),
including a convolution identity
found by Coons \cite{Coons} (Corollary \ref{Coon}),
and we derive a result on the divisibility of \(a(n)\) (Corollary \ref{nc}).

Let $c_i\geq 0$ be non-negative integers, and define $d_i=\sum_{j=1}^i c_i$,
$[c_1,\cdots,c_r]=2^{d_r}+2^{d_{r-1}}+\cdots+2^{d_1}+1$.
Note that if $c_i\geq 1$, then $c_i$ is the distance between the
 two consecutive 1's corresponding to $2^{d_{i-1}}$ and $2^{d_i}$
in the binary expansion of the odd integer
$n=[c_1,\cdots,c_r]$. In this case, clearly $r+1=s(n)$, the sum of the digits
of the binary expansion of $n$. However, in general
$2^{d_r}+2^{d_{r-1}}+\cdots+2^{d_1}+1$ is
not necessarily the binary expansion of $n$. If $r<i$, we define $[c_i,\ldots, c_r]=1$.

Formula (\ref{ac}) below is proved by induction on \(c\), and the others  follow easily from the definitions.
\begin{eqnarray}
a(2^cn+1)&=&a(n)c+a(n+1) \label{ac},\\ \
[c_1,\ldots,c_r]& =& 2[c_1-1,c_2,\ldots,c_r]-1,\ \ c_1>0\\   \
[c_1,\ldots,c_r]&=&1+2^{c_1}[c_2,\ldots,c_r]  \label{2c}\\
a([c_1,\ldots,c_r]-1)&=&a([c_2,\ldots,c_r]) \label{c2}\\
a([c_1,\ldots,c_r]+1)&=&a([c_1-1,c_2,\ldots,c_r]), \ \ c_1>0 \label{cm1}
\end{eqnarray}

\begin{lemma} \label{l2}
Let $c_i$ be a sequence of non-negative integers, and suppose that $c_2>0$.
Then, for each $r\geq 2$,
\begin{equation} \label{l2eq}
a([c_1,c_2,\ldots,c_r])=(c_1+1)a([c_2,c_3,\ldots,c_r])-a([c_3,\ldots,c_r]).
\end{equation}
\end{lemma}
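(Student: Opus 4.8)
The plan is to peel off the leading entry $c_1$ by combining the additive-structure formula (\ref{2c}) with the key identity (\ref{ac}), and then to reconcile the resulting boundary term by means of (\ref{cm1}). Writing $m=[c_2,\ldots,c_r]$, formula (\ref{2c}) gives $[c_1,c_2,\ldots,c_r]=2^{c_1}m+1$, so (\ref{ac}) applied with $n=m$ and $c=c_1$ yields
\[
a([c_1,\ldots,c_r])=c_1\,a(m)+a(m+1)=c_1\,a([c_2,\ldots,c_r])+a([c_2,\ldots,c_r]+1).
\]
Comparing this with the target right-hand side $(c_1+1)a([c_2,\ldots,c_r])-a([c_3,\ldots,c_r])$, I see that the lemma is equivalent to the single relation
\[
a([c_2,\ldots,c_r]+1)=a([c_2,\ldots,c_r])-a([c_3,\ldots,c_r]).
\]

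To establish this relation I would first use (\ref{cm1}), which applies precisely because $c_2>0$, to rewrite the left-hand side as $a([c_2-1,c_3,\ldots,c_r])$. It then remains to show
\[
a([c_2,c_3,\ldots,c_r])-a([c_2-1,c_3,\ldots,c_r])=a([c_3,\ldots,c_r]).
\]
Setting $p=[c_3,\ldots,c_r]$, formula (\ref{2c}) gives $[c_2,c_3,\ldots,c_r]=2^{c_2}p+1$ and $[c_2-1,c_3,\ldots,c_r]=2^{c_2-1}p+1$, the latter being legitimate since $c_2\geq 1$. Applying (\ref{ac}) to each, the two expressions become $c_2\,a(p)+a(p+1)$ and $(c_2-1)\,a(p)+a(p+1)$, whose difference is exactly $a(p)=a([c_3,\ldots,c_r])$, as required.

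The argument is essentially a direct computation once the right formulas are invoked, so I do not expect a serious obstacle; the only point demanding care is the role of the hypothesis $c_2>0$, which is needed both to invoke (\ref{cm1}) and to guarantee $c_2-1\geq 0$, so that $[c_2-1,c_3,\ldots,c_r]$ is a legitimate bracket to which (\ref{2c}) and (\ref{ac}) apply. No restriction on $c_1$ is necessary: the case $c_1=0$ is automatically consistent, since then (\ref{ac}) contributes $a(m+1)$ with zero coefficient on $a(m)$, in agreement with the stated formula.
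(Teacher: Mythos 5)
Your proof is correct and follows essentially the same route as the paper: both peel off $c_1$ via (\ref{2c}), (\ref{ac}), and (\ref{cm1}), and then reduce the lemma to the identity $a([c_2,\ldots,c_r])=a([c_2-1,c_3,\ldots,c_r])+a([c_3,\ldots,c_r])$ (the paper's equation (\ref{sh2})). The only cosmetic difference is that you obtain this identity by subtracting two applications of (\ref{ac}) with exponents $c_2$ and $c_2-1$, whereas the paper derives it from (\ref{c2}) together with (\ref{ac}).
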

\begin{proof}
Using (\ref{2c}), (\ref{cm1}), and  (\ref{ac}) with \(c=c_1\), \(n=[c_2,\ldots,c_r]\), we find
\begin{equation} \label{sh1}
a([c_1,\ldots,c_r])=c_1a([c_2,\ldots,c_r])+a([c_2-1,c_3,\ldots,c_r]).\end{equation}
Then using (\ref{c2}) and (\ref{ac}) again with \(n=[c_2-1,c_3,\ldots,c_r]\), we find
\begin{equation}\label{sh2}
a([c_2,\ldots,c_r])=a([c_2-1,c_3,\ldots,c_r]+a([c_3,\ldots,c_r]).\end{equation}
Comparing (\ref{sh1}) and (\ref{sh2}), the result follows.
\end{proof}

\begin{theorem} \label{th}
Let $c_i: 1\leq i \leq m$ be a sequence of positive integers. Then
\[a([c_1,c_2,\ldots, c_m])=q_r(c_1+1,c_2+1,\ldots, c_m+1).\]
\end{theorem}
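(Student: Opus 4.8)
The plan is to prove the identity by induction on $m$, the length of the sequence, exploiting the fact that Lemma \ref{l2} is precisely the three-term recurrence (\ref{recur}) in disguise. (Note that the subscript on the right-hand side should read $q_m$, so the claim is $a([c_1,\ldots,c_m]) = q_m(c_1+1,\ldots,c_m+1)$.) Setting $y_i = c_i + 1$, the defining recurrence (\ref{recur}) reads
\[q_m(c_1+1,\ldots,c_m+1) = (c_1+1)\,q_{m-1}(c_2+1,\ldots,c_m+1) - q_{m-2}(c_3+1,\ldots,c_m+1),\]
which has exactly the same shape as the recurrence for $a([c_1,\ldots,c_m])$ supplied by Lemma \ref{l2}. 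Once the base cases agree, the two families of expressions must coincide.

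Next I would dispose of the base cases. For $m = 0$ the empty bracket equals $1$ by the convention $[c_i,\ldots,c_r]=1$ when $r<i$, and $a(1)=1=q_0$. For $m = 1$ we have $[c_1]=2^{c_1}+1$, and applying (\ref{ac}) with $n=1$ gives $a(2^{c_1}+1) = c_1\,a(1) + a(2) = c_1 + 1 = q_1(c_1+1)$. These two cases seed the induction, since the recurrence steps the index down by one and by two.

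For the inductive step ($m \geq 2$) I would invoke Lemma \ref{l2}, whose hypothesis $c_2 > 0$ is automatic because every $c_i$ is a positive integer, to obtain $a([c_1,\ldots,c_m]) = (c_1+1)\,a([c_2,\ldots,c_m]) - a([c_3,\ldots,c_m])$. The inductive hypothesis applied to the shorter sequences $(c_2,\ldots,c_m)$ and $(c_3,\ldots,c_m)$ replaces these two $a$-values by $q_{m-1}(c_2+1,\ldots,c_m+1)$ and $q_{m-2}(c_3+1,\ldots,c_m+1)$ respectively, and comparing with the displayed form of (\ref{recur}) above closes the argument.

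Because the whole proof rests on Lemma \ref{l2}, there is no serious computational obstacle in the induction itself; the only points requiring care are checking that the index bookkeeping matches exactly (in particular that the reduction to the empty sequence when $m=2$ is correctly absorbed by the $q_0$ term) and confirming the two base values. The genuine content lies in Lemma \ref{l2}, which has already been established, so I expect the remaining verification to be routine.
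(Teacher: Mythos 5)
Your proof is correct and follows essentially the same route as the paper: the paper's proof simply observes that both $a([c_1,\ldots,c_m])$ and $q_m(c_1+1,\ldots,c_m+1)$ satisfy the recurrence $T_m=(c_1+1)T_{m-1}-T_{m-2}$ (via Lemma \ref{l2} and the definition (\ref{recur})) with the same initial conditions $T_0=1$, $T_1(c_1)=c_1+1$, which is precisely your induction stated compactly. Your version is slightly more careful in that it explicitly verifies the base cases (using (\ref{ac}) with $n=1$ and the convention for the empty bracket) and correctly flags the typo $q_r$ for $q_m$ in the statement.
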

\begin{proof}
Define polynomials $p_r$ by $p_0=1$, and
$p_r(x_1,\ldots,x_r)= q_r(x_1+1,\ldots,x_r+1).$
By Lemma (\ref{l1}) and Lemma (\ref{l2}), both $p_r(c_1,\ldots,c_r)$
and $a([c_1,\ldots,c_r])$ satisfy the polynomial recurrence relation
\begin{equation}\label{0}
T_r(x_1,\ldots,x_r)=(x_1+1)T_{r-1}(x_2,\ldots,x_r)-T_{r-2}(x_3,\ldots,x_r),
\end{equation}
with initial conditions
$T_0=1$, $T_1(x_1)=x_1+1.$
\end{proof}

The expressions for \(a([c_1,\ldots,c_r)]\) for the first few values of \(r\)  are recorded below. The case \(r=3\) was recently
 derived
by Defant in \cite[eq (29)]{Def}.
\[\begin{aligned}
a([c_1])&=c_1+1\\
a([c_1,c_2])&=c_1+c_1c_2+c_2\\
a([c_1,c_2,c_3])&=c_1 c_2 c_3+ c_1 c_2 + c_1 c_3 + c_2 c_3+c_2 -1 \\
a([c_1,c_2,c_3,c_4])&=c_1 c_2 c_3 c_4  + c_1 c_2 c_3 + c_2 c_3 c_4 + c_1 c_3 c_4\\
    &+ c_1 c_2 c_4+ c_2 c_4+ c_1 c_3+ c_2 c_3-c_1-c_4 -1\\
\end{aligned}
\]

The following corollary of the previous theorem corresponds to Corollary 3.3 of \cite{Dup}.

\begin{corollary} \label{cor1}
For \(r\geq 1\), define the matrix
\[M_r(y_1,\ldots,y_r)=\begin{pmatrix}
y_1 & 1 & 0 & 0 & 0 \\
1 & y_2 & 1 & 0 & 0 \\
0 & \ddots & \ddots & \ddots & 0\\
0 & 0 & \ddots & \ddots & 1\\
0 & 0 & 0 & 1 & y_r
\end{pmatrix}
\]
Let \(c_i: 1\leq i\leq m \) be positive integers, and let \(I_r\) be the \(r\times r\) identity matrix. Then
\[a([c_1,\ldots,c_r])=\det(I_r+M_r(c_1,\ldots,c_r)).\]
\end{corollary}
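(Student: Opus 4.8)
The plan is to recognize that the matrix $I_r+M_r(c_1,\ldots,c_r)$ is again of the same tridiagonal shape: adding $I_r$ simply increases each diagonal entry $c_i$ by $1$, so
\[
I_r+M_r(c_1,\ldots,c_r)=M_r(c_1+1,\ldots,c_r+1).
\]
By Theorem \ref{th} we already know $a([c_1,\ldots,c_r])=q_r(c_1+1,\ldots,c_r+1)$. Hence the corollary follows at once from the purely linear-algebraic identity
\[
\det M_r(y_1,\ldots,y_r)=q_r(y_1,\ldots,y_r),
\]
evaluated at $y_i=c_i+1$. So I would reduce the whole statement to proving this determinant formula for $M_r$ with indeterminate entries $y_1,\ldots,y_r$.

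To prove the determinant identity I would induct on $r$, writing $D_r(y_1,\ldots,y_r)=\det M_r(y_1,\ldots,y_r)$ and showing that $D_r$ satisfies the same recurrence (\ref{recur}) and the same initial data as $q_r$. The base cases are immediate: $D_0=1$ (the empty determinant) and $D_1(y_1)=y_1$. For the inductive step I would expand $D_r$ along its first row. The first cofactor deletes row $1$ and column $1$, leaving exactly $M_{r-1}(y_2,\ldots,y_r)$, and so contributes $y_1\,D_{r-1}(y_2,\ldots,y_r)$.

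The delicate step, which I expect to be the only real obstacle, is the second cofactor, obtained by deleting row $1$ and column $2$. This minor is not itself of the form $M_{r-2}$; its first column is $(1,0,\ldots,0)^{T}$, so I would expand once more along that column. This reduces it to $\det M_{r-2}(y_3,\ldots,y_r)=D_{r-2}(y_3,\ldots,y_r)$, and combining with the cofactor sign $(-1)^{1+2}$ produces the term $-D_{r-2}(y_3,\ldots,y_r)$. Thus
\[
D_r(y_1,\ldots,y_r)=y_1\,D_{r-1}(y_2,\ldots,y_r)-D_{r-2}(y_3,\ldots,y_r),
\]
which is precisely (\ref{recur}). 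By induction $D_r=q_r$, and evaluating at $y_i=c_i+1$ together with Theorem \ref{th} gives $\det(I_r+M_r(c_1,\ldots,c_r))=a([c_1,\ldots,c_r])$. The care needed is purely the bookkeeping of this second minor and its sign; there is no analytic or combinatorial difficulty beyond it.
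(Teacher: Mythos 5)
Your proposal is correct and follows essentially the same route as the paper: the paper also reduces the corollary to the identity \(\det M_r(y_1,\ldots,y_r)=q_r(y_1,\ldots,y_r)\), verified by showing the determinant satisfies the recurrence (\ref{recur}) with the same initial conditions, and then invokes Theorem \ref{th} via \(I_r+M_r(c_1,\ldots,c_r)=M_r(c_1+1,\ldots,c_r+1)\). The only difference is that you spell out the cofactor expansion (including the second minor and its sign) that the paper dismisses as ``easy to check,'' and your bookkeeping there is accurate.
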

\begin{proof}
It is easy to check that \(\det (M_r(y_1,\ldots,y_r))\) satisfies the same recurrence relation as \(q_r(y_1,\ldots,y_r)\), with the same
initial conditions. So   \(\det (M_r(y_1,\ldots,y_r))=q_r(y_1,\ldots,y_r)\).
\end{proof}

While no easily discernible pattern is apparent in the polynomials \(p_r\)
(or equivalently the expressions for \(a([c_1,\ldots,c_r])\) given above),
listing the first few polynomials \(q_r\) reveals a surprising structure:

\[\begin{aligned}
q_2(y_1,y_2)&=y_1y_2-1\\
q_3(y_1,y_2,y_3)&=y_1y_2y_3-y_1-y_3\\
q_4(y_1,y_2,y_3,y_4)&=y_1y_2y_3y_4-y_1y_2-y_1y_4-y_3y_4 \\
q_5(y_1,y_2,y_3,y_4,y_5)&=y_1y_2y_3y_4y_5-y_1 y_2 y_3-y_1 y_2 y_5-y_1 y_4 y_5-y_3 y_4 y_5+y_1+y_3+y_5
\end{aligned}
\]

The next theorem gives a precise description of this structure.
For integers $r\geq 1$ and $1\leq s \leq r$, define the sets
\[A_{r,s}=\{(i_1,i_2,\ldots,i_s): 1\leq i_1 <i_2<\cdots < i_s\leq r, i_j\equiv
j \pmod{2}\},\]
and $A_{r,0}=\{0\}$.
So $A_{r,s}$ consists of increasing sequences of integers that start with an
odd number and then alternate between even and odd numbers. For example,
\(A_{5,3}=\{(1,2,3),(1,2,5),(1,4,5),(3,4,5)\}\).

If $u=(i_1,i_2,\ldots,i_s)\in A_{r,s}$, we write $y_u=y_{i_1}y_{i_2}\cdots y_{i_s}$, and $y_0=1$.
 For $r\geq 1$ and $0\leq s \leq r$,
let \( \omega_{r,s}=(-1)^r\cos\left(\pi(r+s)/2\right)\).
\begin{theorem} \label{l1}
If $r\geq 2$, then
\begin{equation}\label{Ars}
q_r(y_1,y_2,\ldots,y_r)=\sum_{s=0}^r \omega_{r,s}\sum_{u\in A_{r,s}}y_u.\end{equation}
\end{theorem}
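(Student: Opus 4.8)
The plan is to prove that the right-hand side of (\ref{Ars}), which I will call $Q_r(y_1,\ldots,y_r)=\sum_{s=0}^r \omega_{r,s}\sum_{u\in A_{r,s}}y_u$, satisfies the defining recurrence (\ref{recur}) together with the initial data $Q_0=1$ and $Q_1(y_1)=y_1$; by the uniqueness of the solution of (\ref{recur}) this forces $Q_r=q_r$. First I would note that formula (\ref{Ars}) in fact holds already for $r=0,1$: a direct evaluation of $\omega_{0,0},\omega_{1,0},\omega_{1,1}$ and of the sets $A_{0,0},A_{1,0},A_{1,1}$ gives $Q_0=1$ and $Q_1=y_1$. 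These serve as the base of an induction on $r$ beginning at $r=2$.

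The heart of the argument is a combinatorial decomposition of each $A_{r,s}$ according to the first entry $i_1$. Since $i_1\equiv 1\pmod 2$, for $s\geq 1$ every $u\in A_{r,s}$ satisfies either $i_1=1$ or $i_1\geq 3$, and I would set up two parity-tracking bijections. For the shift by two, the map $u=(i_1,\ldots,i_s)\mapsto(i_1-2,\ldots,i_s-2)$ is a bijection from $\{u\in A_{r,s}:i_1\geq 3\}$ onto $A_{r-2,s}$, since subtracting $2$ preserves all parities; under the substitution $y_k\mapsto y_{k+2}$ it identifies the corresponding monomials, matching the term $Q_{r-2}(y_3,\ldots,y_r)$. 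For the shift by one, the map $u=(1,i_2,\ldots,i_s)\mapsto(i_2-1,\ldots,i_s-1)$ is a bijection from $\{u\in A_{r,s}:i_1=1\}$ onto $A_{r-1,s-1}$; here subtracting $1$ flips every parity, so the $t$-th entry $i_{t+1}-1\equiv t\pmod 2$ lands correctly, and under $y_k\mapsto y_{k+1}$ together with the extra factor $y_1$ the monomial $y_u$ is reproduced, matching $y_1\,Q_{r-1}(y_2,\ldots,y_r)$.

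With these bijections in hand, the recurrence reduces to checking two scalar identities among the coefficients: $\omega_{r,s}=\omega_{r-1,s-1}$ (governing the $i_1=1$ terms) and $\omega_{r,s}=-\omega_{r-2,s}$ (governing the $i_1\geq 3$ terms and, as the special case $s=0$, the empty monomial $y_0=1$). Writing $\omega_{r,s}=(-1)^r\cos(\pi(r+s)/2)$ and using $\cos(\theta-\pi)=-\cos\theta$, each identity follows in one line. Assembling the pieces, $y_1\,Q_{r-1}(y_2,\ldots,y_r)-Q_{r-2}(y_3,\ldots,y_r)$ collects exactly the $i_1=1$ part and the $i_1\geq 3$ part (plus the $s=0$ constant) of $Q_r$ with the correct coefficients, so the two sides agree.

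The step I expect to require the most care is the parity bookkeeping in the shift-by-one bijection, since decrementing indices toggles the congruence class while simultaneously dropping the length from $s$ to $s-1$. To be sure that no monomial is double counted or dropped, I would verify the boundary values of $s$ explicitly: in particular that $A_{r,r}$ and $A_{r,r-1}$ contribute no terms with $i_1\geq 3$, which is consistent with $Q_{r-2}$ ranging only over $0\leq s\leq r-2$, and that the lone constant term at $s=0$ is produced solely by $-Q_{r-2}$.
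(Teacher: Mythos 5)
Your proposal is correct and takes essentially the same route as the paper: you split $A_{r,s}$ according to whether $i_1=1$ or $i_1\geq 3$, use exactly the paper's two shift bijections onto $A_{r-1,s-1}$ and $A_{r-2,s}$, and reduce the recurrence to the coefficient identities $\omega_{r,s}=\omega_{r-1,s-1}$ and $\omega_{r,s}=-\omega_{r-2,s}$. Your explicit verification of the base values $Q_0=1$, $Q_1=y_1$ and of the boundary cases in $s$ simply makes explicit what the paper leaves implicit.
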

\begin{proof}
We will show that the right side of (\ref{Ars}) satisfies the recurrence (\ref{recur}).
If $r\geq 1$ and $1\leq s \leq r$, let
\[B_{r,s}=\{(i_1,i_2,\ldots,i_s)\in A_{r,s}: i_1=1\},\]
\[C_{r,s}=\{(i_1,i_2,\ldots,i_s)\in A_{r,s}: i_1>1\}.\]
Then $A_{r,s}$ is the disjoint union of $B_{r,s}$ and $C_{r,s}$. There are
bijections $
\phi:  B_{r,s}\rightarrow A_{r-1,s-1}$ and $
\psi: C_{r,s} \rightarrow A_{r-2,s}$ given by
$\phi\left((1,i_2,\ldots, i_s)\right)=(i_2-1,i_3-1,\ldots,i_s-1)$ and
$\psi\left( (i_1,i_2,\ldots, i_s)\right) =(i_1-2, \ldots, i_s-2)$.
If $z_i=y_{i+1}$, $1\leq i \leq r-1$ and $w_i=y_{i+2}$, $1\leq i \leq r-2$, then
$y_u=y_1z_{\phi(u)}$ for $u\in B_{r,s}$ and $y_u=w_{\psi(u)}$
for $u\in C_{r,s}$. The result then easily follows by splitting the sum over \(A_{r,s}\) as
\(\sum_{u\in B_{r,s}}+\sum_{u\in C_{r,s}}\), and using the fact that
\(\omega_{r,s+1}=\omega_{r-1,s}\), and \(\omega_{r,s}=-\omega_{r-2,s}\).
\end{proof}


The following corollary is attributed to B. Reznick in \cite{Sam} (see \cite[Lemma 2.5]{Rez}).
\begin{corollary} \label{Cor}
If $n$ is a positive integer, let $\overline{n}$ denote the integer obtained by reading the digits in the binary expansion of $n$ in reverse order. Then
$a(\overline{n})=a(n)$.
\end{corollary}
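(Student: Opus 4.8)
The plan is to convert the statement into a symmetry property of the polynomials \(q_r\) and then read that symmetry off the results already established. First I would dispose of the trailing zeros: writing \(n=2^k m\) with \(m\) odd, the binary string of \(n\) is that of \(m\) with \(k\) zeros appended, so reversing and dropping leading zeros gives \(\overline n=\overline m\); since \(a(2^k m)=a(m)\) by (\ref{def}), it suffices to treat odd \(n\), the case \(n=1\) being immediate.

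So let \(n\ge 3\) be odd and write \(n=[c_1,\ldots,c_r]\) with all \(c_i\ge 1\). The \(1\)'s of \(n\) occupy positions \(0,d_1,\ldots,d_r\) with \(d_i=c_1+\cdots+c_i\), and reversal sends position \(k\) to \(d_r-k\). Listing the images in increasing order, \(0,\,d_r-d_{r-1},\,\ldots,\,d_r-d_1,\,d_r\), the successive gaps come out as \(c_r,c_{r-1},\ldots,c_1\), so \(\overline n=[c_r,\ldots,c_1]\) (again odd). Theorem \ref{th} applied to \(n\) and to \(\overline n\) then turns the desired identity \(a(n)=a(\overline n)\) into
\[q_r(c_1+1,\ldots,c_r+1)=q_r(c_r+1,\ldots,c_1+1),\]
that is, invariance of \(q_r\) under reversing its argument list.

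What remains is to show \(q_r(y_1,\ldots,y_r)=q_r(y_r,\ldots,y_1)\), and here I would use Corollary \ref{cor1}: reversing the diagonal of the tridiagonal matrix \(I_r+M_r\) is the same as conjugating by the anti-diagonal permutation \(J\) (\(J_{ij}=1\) iff \(i+j=r+1\)), which preserves both the off-diagonal \(1\)'s and the determinant, giving the symmetry at once. A second, more structural, derivation runs through Theorem \ref{l1}: the reindexing \(i\mapsto r+1-i\) sends each \(y_u\) with \(u\in A_{r,s}\) to a monomial \(y_{u'}\) with \(u'\) increasing, and a parity computation shows \(u'\in A_{r,s}\) exactly when \(r\equiv s\pmod 2\). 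The main point to get right is that this is harmless: the coefficients \(\omega_{r,s}\) vanish whenever \(r\not\equiv s\pmod 2\), so the terms on which reversal fails to preserve \(A_{r,s}\) are precisely the terms that are absent from (\ref{Ars}), and the surviving terms are merely permuted. Matching these two parity conditions is the only delicate step, and it is exactly what the determinant argument lets one avoid.
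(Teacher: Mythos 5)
Your proposal is correct and follows essentially the same route as the paper: reduce to odd $n$, observe that $\overline{[c_1,\ldots,c_r]}=[c_r,\ldots,c_1]$, and reduce via Theorem \ref{th} to the palindromic symmetry $q_r(y_1,\ldots,y_r)=q_r(y_r,\ldots,y_1)$, which you then establish by both of the paper's own arguments (conjugating $I_r+M_r$ by the anti-diagonal permutation in Corollary \ref{cor1}, and the reversal reindexing on $A_{r,s}$ from Theorem \ref{l1} together with the vanishing of $\omega_{r,s}$ when $r\not\equiv s\pmod 2$). Your treatment of trailing zeros and of the gap computation for $\overline{n}$ is more explicit than the paper's, but the substance is identical.
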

\begin{proof}
It is enough to consider the case $n$ odd. Note that if $n=[c_1,\ldots,c_m]$,
then $\overline{n}=[c_m,\ldots, c_1]$. So the result will follow if we show that
$q_m(y_1,\ldots,y_m)=q_m(y_m,\ldots,y_1)$. This follows easily from either Corollary \ref{cor1},
by a permutation of the rows and columns that reverses the main diagonal of the matrix \(M_r\), or from
 Theorem \ref{l1}, by noticing that there is an involution $\beta_{r,s}$ on the sets
$A_{r,s}$ given by $(i_1,\ldots,i_s)\mapsto (i'_1,\ldots,i'_s)$, where
$i'_j=r-i_{s-j+1}+1$, because if $r$ and $s$ have the same parity, then
$r-i_{s-j+1}+1\equiv r-s+j\equiv j \pmod{2}$, while if $r\not \equiv s \pmod{2}$, then $\omega_{r,s}=0$.
\end{proof}

\begin{proposition}\label{prop}
If \(r\geq 0, k\geq 0\), then
\begin{eqnarray*} q_{k+r}(t_1,\ldots,t_k,y_1,\ldots,y_r)&=&q_k(t_1,\ldots,t_k)q_r(y_1,\ldots, y_r) \\
&-& q_{k-1}(t_1,\ldots,t_{k-1})q_{r-1}(y_2,\ldots,q_r).\end{eqnarray*}
\end{proposition}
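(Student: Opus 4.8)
The plan is to prove the identity by induction on $k$, using nothing beyond the defining recurrence (\ref{recur}) together with the conventions $q_0=1$ and $q_{-1}=0$. (The last factor on the right-hand side should be read as $q_{r-1}(y_2,\ldots,y_r)$; for brevity I will write $Q=q_{r-1}(y_2,\ldots,y_r)$, and I note that the $y$-block is never altered during the induction.) Inducting on $k$ is convenient precisely because (\ref{recur}) peels a variable off the \emph{left} end, which is the $t$-side of the concatenation.

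First I would settle the base cases. For $k=0$ the right-hand side is $q_0\,q_r-q_{-1}Q=q_r(y_1,\ldots,y_r)$, which equals the left-hand side. For $k=1$, applying (\ref{recur}) to the leading variable gives $q_{1+r}(t_1,y_1,\ldots,y_r)=t_1\,q_r(y_1,\ldots,y_r)-q_{r-1}(y_2,\ldots,y_r)$, while the right-hand side is $q_1(t_1)q_r-q_0Q=t_1q_r-Q$; these agree.

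For the inductive step ($k\geq 2$) I would apply (\ref{recur}) to the leftmost variable $t_1$ of $q_{k+r}(t_1,\ldots,t_k,y_1,\ldots,y_r)$, writing it as a combination of $q_{k+r-1}(t_2,\ldots,t_k,y_1,\ldots,y_r)$ and $q_{k+r-2}(t_3,\ldots,t_k,y_1,\ldots,y_r)$. Each of these has fewer than $k$ leading $t$-variables, so the induction hypothesis applies to both, and in each case it produces exactly the same two $y$-factors $q_r(y_1,\ldots,y_r)$ and $Q$. Substituting and collecting by these common factors yields
\[
q_{k+r}=q_r(y_1,\ldots,y_r)\,\big[t_1q_{k-1}(t_2,\ldots,t_k)-q_{k-2}(t_3,\ldots,t_k)\big]-Q\,\big[t_1q_{k-2}(t_2,\ldots,t_{k-1})-q_{k-3}(t_3,\ldots,t_{k-1})\big].
\]
By (\ref{recur}) the first bracket is $q_k(t_1,\ldots,t_k)$, and the second bracket is $q_{k-1}(t_1,\ldots,t_{k-1})$ (the recurrence applied to the shorter sequence $t_1,\ldots,t_{k-1}$), which is precisely the claimed identity.

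The only point needing care is the index bookkeeping in the inductive step: one must verify that the two applications of the induction hypothesis deliver identical $y$-factors so the terms can be regrouped, and that the resulting two bracketed expressions are genuine instances of (\ref{recur}) with correctly shifted arguments. I do not expect a real obstacle here, since the whole argument is a short continuant computation. As a cross-check, the same identity can be read off from the tridiagonal determinant of Corollary \ref{cor1} by cutting the matrix $M_{k+r}$ between rows $k$ and $k+1$: the coupling off-diagonal entries are both $1$, so the standard cut formula for tridiagonal determinants gives $\det M_{k+r}=\det M_k\,\det M_r-\det M_{k-1}\,\det M_{r-1}'$, which is the statement.
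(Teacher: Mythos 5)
Your proof is correct, but it follows a genuinely different route from the paper's. The paper also inducts on $k$, but it increments $k$ by regrouping the concatenation, writing $q_{r+(k+1)}=q_{(r+1)+k}$ (i.e., absorbing $t_{k+1}$ into the trailing block so the induction hypothesis applies with $k$ leading variables), and then it must invoke the palindrome symmetry $q_m(y_1,\ldots,y_m)=q_m(y_m,\ldots,y_1)$ of Corollary \ref{Cor} to recognize the resulting combination $t_{k+1}q_k(t_1,\ldots,t_k)-q_{k-1}(t_1,\ldots,t_{k-1})$ as $q_{k+1}(t_1,\ldots,t_{k+1})$ -- necessary because the defining recurrence (\ref{recur}) only peels variables off the \emph{left} end. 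You avoid Corollary \ref{Cor} entirely: by peeling $t_1$ from the left and running a strong induction (hypothesis invoked at both $k-1$ and $k-2$, whence your two base cases $k=0,1$), you stay within (\ref{recur}) itself, at the cost of a two-step induction and of the conventions $q_{-1}=0$, $q_0=1$ needed to make the degenerate instances come out right (e.g.\ your second bracket at $k=2$ reads $t_1q_0-q_{-1}=q_1(t_1)$, which is not literally an instance of (\ref{recur}) but is covered by the conventions, as is the case $r=0$, where $Q=0$ and the identity is trivial). What each approach buys: the paper's proof is a one-liner given that Corollary \ref{Cor} is already in hand, but that corollary rests on Theorem \ref{l1} or Corollary \ref{cor1}; your argument is self-contained from the recurrence alone, and would survive in settings where the polynomials lack the palindromic symmetry. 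Your reading of the paper's typo $q_{r-1}(y_2,\ldots,q_r)$ as $q_{r-1}(y_2,\ldots,y_r)$ is the intended one, your index bookkeeping checks out, and your determinant cross-check is also valid: cutting the tridiagonal matrix $M_{k+r}$ between rows $k$ and $k+1$, where both coupling entries equal $1$, gives $\det M_{k+r}=\det M_k\det M_r-\det M_{k-1}\cdot\det M_{r-1}(y_2,\ldots,y_r)$, which is in effect a third proof via Corollary \ref{cor1}.
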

\begin{proof}
The proposition is proved by induction on \(k\), by writing \(q_{r+(k+1)}=q_{(r+1)+k}\), and making use of
Corollary \ref{Cor}.
\end{proof}

As a consequence of the last proposition, we obtain a simple proof of the following result of Coons \cite{Coons}.
\begin{corollary} \label{Coon}
If \(e\), \(u\) and \(c\) are non-negative integers with \(c\leq 2^e\), then
\[a(c)a(2u+5)+a(2^e-c)a(2u+3)=a(2^e(u+2)+c)+a(2^e(u+1)+c).\]
\end{corollary}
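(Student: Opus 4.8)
The plan is to deduce the identity from a single \emph{transfer} lemma that isolates the dependence on the low-order block $c$: for all integers $e\ge 0$, $w\ge 0$ and $0\le c\le 2^e$,
\[
a(2^e w+c)=a(2^e-c)\,a(w)+a(c)\,a(w+1).
\]
This is exactly the kind of splitting of a concatenated binary block that Proposition \ref{prop} encodes on the polynomial side through Theorem \ref{th}, with $c$ furnishing the first (low) block and $w$ the second. Granting the lemma, the corollary is immediate: applying it with $w=u+1$ and with $w=u+2$ and adding gives
\[
a(2^e(u+2)+c)+a(2^e(u+1)+c)=a(2^e-c)\bigl(a(u+1)+a(u+2)\bigr)+a(c)\bigl(a(u+2)+a(u+3)\bigr),
\]
and then $a(u+1)+a(u+2)=a(2u+3)$ and $a(u+2)+a(u+3)=a(2u+5)$ by the defining recursion (\ref{def}). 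This is precisely the asserted left-hand side, so the whole problem reduces to the transfer lemma.

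I would prove the transfer lemma by induction on $e$. The base case $e=0$, so $c\in\{0,1\}$, is a direct check from $a(0)=0$, $a(1)=1$. For the inductive step with $e\ge 1$ I split on the parity of $c$. If $c=0$ both sides equal $a(w)$, using $a(2^e)=1$. If $c$ is even with $0<c\le 2^e$, then $a(2^ew+c)=a(2^{e-1}w+c/2)$ since $a(2k)=a(k)$, and the inductive hypothesis at level $e-1$, together with $a(2^{e-1}-c/2)=a(2^e-c)$ and $a(c/2)=a(c)$ (again $a(2k)=a(k)$), settles this case. If $c$ is odd then $2^ew+c$ is odd, so by (\ref{def}),
\[
a(2^ew+c)=a\Bigl(2^{e-1}w+\tfrac{c-1}{2}\Bigr)+a\Bigl(2^{e-1}w+\tfrac{c+1}{2}\Bigr),
\]
and both arguments $(c\pm1)/2$ lie in $[0,2^{e-1}]$, so I may apply the inductive hypothesis to each term.

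The step where the structure must genuinely cooperate, and the only place I expect real work, is the recombination of the two coefficients after invoking the inductive hypothesis in the odd case. Collecting the coefficient of $a(w+1)$ produces $a(\tfrac{c-1}{2})+a(\tfrac{c+1}{2})=a(c)$ by (\ref{def}), which is clean. Collecting the coefficient of $a(w)$ produces $a(2^{e-1}-\tfrac{c-1}{2})+a(2^{e-1}-\tfrac{c+1}{2})$; writing $m=2^{e-1}-\tfrac{c+1}{2}$, this is $a(m)+a(m+1)=a(2m+1)=a(2^e-c)$, again by (\ref{def}). Thus both coefficients collapse to exactly $a(2^e-c)$ and $a(c)$, closing the induction. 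I note that one can instead route the argument through Proposition \ref{prop}: writing $c$ in the bracket notation and splitting the continuant for $2^ew+c$ yields $a(2^ew+c)=\bigl((e-\delta)a(c)-a(c^-)\bigr)a(w)+a(c)a(w+1)$, where $\delta$ is the position of the top bit of $c$ and $c^-$ is $c$ with that bit deleted; the residual task is then the coefficient identification $a(2^e-c)=(e-\delta)a(c)-a(c^-)$, which is the same obstacle seen from the polynomial side. The inductive route above sidesteps that identification by letting the recursion (\ref{def}) carry out the bookkeeping directly.
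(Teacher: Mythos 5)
Your proof is correct, and it takes a genuinely different route from the paper's. The paper deduces the identity from its polynomial machinery: it writes $c=[c_1,\ldots,c_{k-1}]$ and $c+2^e=[c_1,\ldots,c_k]$, represents $u+2$ as a bracket $[u_1,\ldots,u_r]$, applies the concatenation formula of Proposition \ref{prop} to $[c_1,\ldots,c_k,u_1,\ldots,u_r]=2^e(u+2)+c$, and then finishes using $a(2^e+c)=a(2^e-c)+a(c)$ together with the auxiliary identity $a(2^e-c)a(u+1)+a(c)a(u+2)=a(2^e(u+1)+c)$, which it merely notes is ``easily proved by induction on $e$.'' That auxiliary identity is exactly the $w=u+1$ instance of your transfer lemma $a(2^ew+c)=a(2^e-c)\,a(w)+a(c)\,a(w+1)$; your observation is that the lemma alone, applied at $w=u+1$ and $w=u+2$ and summed, already yields the corollary, so Proposition \ref{prop}, the bracket decompositions, and the identity $a(2^e+c)=a(2^e-c)+a(c)$ all become dispensable. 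Your induction on $e$ is complete: the delicate odd-case recombination is handled correctly, since with $m=2^{e-1}-(c+1)/2$ the coefficient of $a(w)$ is $a(m)+a(m+1)=a(2m+1)=a(2^e-c)$, and the range check holds because an odd $c\le 2^e$ with $e\ge 1$ forces $c\le 2^e-1$, so $0\le (c\pm 1)/2\le 2^{e-1}$; the even case collapses under $a(2k)=a(k)$ as you say. What each approach buys: the paper's proof is deliberately a showcase for Theorem \ref{th} and Proposition \ref{prop} --- the point of the note being that Coons's identity drops out of the polynomial calculus --- whereas yours is shorter, entirely self-contained from the defining recursion (\ref{def}), and applies uniformly in $u$ and $c$ (the paper's representation $u+2=[u_1,\ldots,u_r]$ with positive $u_i$ presupposes $u+2$ odd, and its proof must separately dispatch $c\in\{0,1\}$ and $u=0$, reductions your argument never needs). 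Your closing aside correctly identifies where the polynomial route hides the same work: the coefficient identification $a(2^e-c)=(e-\delta)a(c)-a(c^-)$ is precisely the obstacle that the paper's unproved auxiliary identity absorbs, and your direct induction is an honest way to discharge it.
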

\begin{proof}
The result holds for \(c=0\) trivially,  and for \(c=1\) or \(u=0\) by using the basic identities for the Stern's sequence.  Decreasing \(e\) if necessary, we may assume that \(c\) is odd and \(c\geq 3\). So there is some \(k\geq 2\) and integers \(c_1,\ldots,c_{k-1}\) such that
 \(c=[c_1,\ldots,c_{k-1}]\). Since \(c\leq 2^e\), we can define the positive integer \(c_k=e-(c_1+\cdots+c_{k-1})\), and then
 \([c_1,\ldots,c_k]=c+2^e\).
Write \(u+2=[u_1,\ldots,u_r]\) for some positive integers \(u_1,\ldots,u_r\).  Then
\([c_1,\cdots,c_k,u_1,\cdots,u_r]=c+2^e (u+2)\), and it is easily checked that
 \(q_{r-1}(u_2+1,\ldots,u_r+1)=a(u+1)\).
 Proposition \ref{prop},
(with \(y_i=u_i+1, t_i=c_i+1\), gives us the identity
\[a(c+2^e(u+2))+a(c)a(u+2) = a(c+2^e)a(u+2)+a(c)a(u+3).\]
Use  \(a(2u+3)=a(u+2)+a(u+1)\), \(a(2u+5)=a(u+2)+a(u+3)\), the basic identity
\(a(2^e+c)=a(2^e-c)+a(c)\) (see\cite{Sam}) and the identity \(a(2^e-c)a(u+1) +a(c)a(u+2)=a(2^e(u+1)+c)\) (easily proved by induction on
 \(e\)) to get the result.
 \end{proof}

The following result is an easy consequence of the
recurrence (\ref{recur}) satisfied by the polynomials \(q_r\).
Recall that $s(n)$ is the number of 1's appearing in the binary expansion
of $n$.

\begin{corollary} \label{nc}
Suppose $k$ is a positive integer that divides
 the exponent of each power of 2 appearing in the binary expansion of $n$.
 Then:
 \[a(n)\equiv
 \begin{cases} 0 \pmod{k} &\mbox{if } s(n)\equiv 0 \mbox{ or } 3  \pmod{6} \\
1 \pmod{k} & \mbox{if } s(n) \equiv 1 \mbox{ or } 2 \pmod{6}\\
-1 \pmod{k} & \mbox{if } s(n) \equiv 4 \mbox{ or } 5 \pmod{6}. \end{cases}
\]
\end{corollary}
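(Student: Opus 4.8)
\noindent The plan is to translate the statement, via Theorem \ref{th}, into a congruence for the polynomial $q_m$ evaluated at the all-ones point, and then to read off the three cases from a simple period-six pattern. First I would reduce to the case where $n$ is odd. If $n=2^a n'$ with $n'$ odd, then the exponents appearing in the binary expansion of $n$ are obtained from those of $n'$ by adding $a$; since $k$ divides all of them, in particular $k\mid a$, and $k$ divides every exponent of $n'$ as well. Because $a(2n)=a(n)$ we have $a(n)=a(n')$ and $s(n)=s(n')$, so it suffices to treat $n'$. Assume then that $n$ is odd and write $n=[c_1,\ldots,c_m]$ with all $c_i$ positive, so that $m=s(n)-1$. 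The exponents appearing are $0=d_0,d_1,\ldots,d_m$ with $d_i=c_1+\cdots+c_i$, and the hypothesis says $k\mid d_i$ for every $i$; since $c_i=d_i-d_{i-1}$, this gives $k\mid c_i$, i.e.\ $c_i+1\equiv 1\pmod{k}$ for each $i$.

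Next, because $q_m$ has integer coefficients, Theorem \ref{th} yields
\[
a(n)=q_m(c_1+1,\ldots,c_m+1)\equiv q_m(1,\ldots,1)\pmod{k}.
\]
Set $b_m=q_m(1,\ldots,1)$. Substituting $y_1=\cdots=y_r=1$ into the recurrence (\ref{recur}) gives the scalar recurrence $b_r=b_{r-1}-b_{r-2}$ with $b_0=b_1=1$, whose solution is periodic of period six: the values of $b_m$ for $m\equiv 0,1,2,3,4,5\pmod{6}$ are $1,1,0,-1,-1,0$ respectively.

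Finally I would substitute $m=s(n)-1$ and compare. For $s(n)\equiv 1,2\pmod{6}$ one gets $m\equiv 0,1$ and $b_m=1$; for $s(n)\equiv 0,3\pmod{6}$ one gets $m\equiv 5,2$ and $b_m=0$; for $s(n)\equiv 4,5\pmod{6}$ one gets $m\equiv 3,4$ and $b_m=-1$. This matches the three cases exactly. There is no serious obstacle here; the only points requiring care are the bookkeeping of the odd-case reduction (ensuring that $s(n)$ is unchanged and that the divisibility hypothesis persists after dividing out the power of $2$) and the index shift $m=s(n)-1$ when matching the period-six pattern to the residue of $s(n)$.
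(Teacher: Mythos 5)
Your proof is correct and takes essentially the same route as the paper: reduce to odd $n$, apply Theorem \ref{th} to get $a(n)\equiv q_m(1,\ldots,1)\pmod{k}$, and solve the recurrence $b_m=b_{m-1}-b_{m-2}$, $b_0=b_1=1$, whose solution has period six. Your bookkeeping is in fact slightly more careful than the paper's, which writes $m=s(n)$ where the correct index is $m=s(n)-1$ (and which expresses $b_m$ in the closed form $\cos(m\pi/3)+\sin(m\pi/3)/\sqrt{3}$ rather than listing the period-six values $1,1,0,-1,-1,0$ --- the same computation).
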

\begin{proof}
We may assume that $n$ is odd. Let $m=s(n)$.
By assumption, $n=[kc_1,\ldots,kc_m]$, and by Theorem \ref{th}, $a(n)=q_m(kc_1+1,\ldots,kc_m+1)\equiv q_m(1,\ldots,1) \pmod{k}$.
If $b_m=q_m(1,\ldots,1)$, then $b_m$ satisfies the recurrence
$b_m=b_{m-1}-b_{m-2}$ with initial conditions $b_0=b_1=1$. This
 recurrence is easily solved as $b_m=\cos(m\pi/3)+\sin(m \pi/3)/\sqrt{3}$ and
 the result follows.
\end{proof}

We conclude with a Binet type formula easily obtained by letting all variables of \(q_r\) equal a single variable \(t\).
\begin{corollary}
For all integers $r\geq 1$ and $t\geq 2$,
\[a\left(\frac{2^{rt}-1}{2^t-1}\right)=\frac{\lambda^r-\mu^r}{\sqrt{(t-1)(t+3)}},\]
where
\[\lambda=\frac{t+1+\sqrt{(t-1)(t+3)}}{2}, \ \ \ \mu=\frac{t+1-\sqrt{(t-1)(t+3)}}{2}.\]
\end{corollary}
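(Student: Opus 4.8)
The plan is to reduce the statement to Theorem \ref{th} and then solve a constant-coefficient linear recurrence in closed form. First I would identify the integer appearing on the left. Since
\[\frac{2^{rt}-1}{2^t-1}=1+2^t+2^{2t}+\cdots+2^{(r-1)t},\]
its binary expansion has $1$'s exactly at the positions $0,t,2t,\ldots,(r-1)t$, so every gap between consecutive $1$'s equals $t$. In the bracket notation this means
\[\frac{2^{rt}-1}{2^t-1}=[\,\underbrace{t,t,\ldots,t}_{r-1}\,],\]
a bracket with $r-1$ entries; note the index shift, as $r$ powers of two produce only $r-1$ gaps. Applying Theorem \ref{th} with all $c_i=t$ then gives
\[a\!\left(\frac{2^{rt}-1}{2^t-1}\right)=q_{r-1}(\underbrace{t+1,\ldots,t+1}_{r-1}).\]

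Next I would set $Q_n=q_n(t+1,\ldots,t+1)$ with all $n$ arguments equal to $t+1$, and observe that evaluating the defining recurrence (\ref{recur}) at equal arguments collapses it into the scalar recurrence
\[Q_n=(t+1)Q_{n-1}-Q_{n-2},\qquad Q_0=1,\ Q_1=t+1.\]
Its characteristic equation $x^2-(t+1)x+1=0$ has discriminant $(t+1)^2-4=t^2+2t-3=(t-1)(t+3)$, so the two roots are precisely $\lambda$ and $\mu$ as defined in the statement. By Vieta's formulas $\lambda+\mu=t+1$ and $\lambda\mu=1$, and $\lambda-\mu=\sqrt{(t-1)(t+3)}$, which is nonzero exactly because $t\geq 2$ (this is where the hypothesis $t\geq 2$ is used, to guarantee distinct roots).

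Finally I would solve the recurrence. Writing $Q_n=A\lambda^n+B\mu^n$ and imposing $Q_0=1$ and $Q_1=\lambda+\mu$, a short computation gives $A=\lambda/(\lambda-\mu)$ and $B=-\mu/(\lambda-\mu)$, hence
\[Q_n=\frac{\lambda^{n+1}-\mu^{n+1}}{\lambda-\mu}.\]
Taking $n=r-1$ yields $Q_{r-1}=(\lambda^r-\mu^r)/(\lambda-\mu)$, and substituting $\lambda-\mu=\sqrt{(t-1)(t+3)}$ produces the claimed formula. I should also check the boundary case $r=1$ separately: there the left side is $a(1)=1$ and the right side is $(\lambda-\mu)/(\lambda-\mu)=1$, consistent with the convention that the empty bracket equals $1$ and $Q_0=1$. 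The only point demanding care is the index bookkeeping—the passage from $r$ terms of the geometric series to $r-1$ gaps, and then from $Q_{r-1}$ back up to the exponent $r$ in $\lambda^r-\mu^r$—but this is the main (and rather mild) obstacle; once the variables are specialized, the remainder is the standard Binet solution of a Fibonacci-type recurrence.
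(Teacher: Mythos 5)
Your proof is correct and takes essentially the same route as the paper: both identify $(2^{rt}-1)/(2^t-1)$ as the bracket $[t,\ldots,t]$ with $r-1$ entries, apply Theorem \ref{th} to reduce the claim to evaluating $q_{r-1}(t+1,\ldots,t+1)$, and solve the resulting recurrence $b_n=(t+1)b_{n-1}-b_{n-2}$ via its characteristic roots $\lambda,\mu$. The only difference is one of detail, not substance: you carry out the Binet-type solution explicitly (where the paper simply says ``solving this recurrence''), and your index bookkeeping and the boundary check at $r=1$ are accurate.
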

\begin{proof}
Note that $(2^{rt}-1)/(2^t-1)=[t,\ldots,t]$, where there are $r-1$ entries. So $b_r= a((2^{rt}-1)/(2^t-1))=q_{r-1}(t+1,\ldots,t+1)$ satisfies the recurrence $b_r=(t+1)b_{r-1}-b_{r-2}$ with initial conditions $b_1=1$, $b_2=t+1$. Solving this recurrence we obtain the result.
\end{proof}\\

\noindent
{\bf Remark}
The previous corollary lends itself to natural generalizations, by considering for example \(q_r(t,s,t,s,\ldots)\) or
\(q_r(t,s,u,t,s,u,\ldots)\) and so on. We leave the exploration of the corresponding formulas for the Stern's sequence
to the interested reader. \\

\noindent
{\bf Acknowledgment}
I thank Sam Northshield for pointing out the article \cite{Dup} (that
 led to the current title of this note) and for several helpful comments
on an earlier version of the paper, and Christophe Vignat for pointing out the article \cite{Coons}.

\end{document}